\newtheorem{theorem}{Theorem}[section]
\newtheorem{corollary}{Corollary}
\newtheorem{proposition}{Proposition}
\theoremstyle{definition}
\newtheorem{definition}[theorem]{Definition}
\newtheorem{algorithm}[theorem]{Algorithm}
\title{A NEW CLASS OF POSITIVE SEMI-DEFINITE TENSORS \thanks{To appear in: Journal of Industrial and Management Optimization}}
\author{
Yi Xu\thanks{Mathematics Department, Southeast University, 2 Sipailou, Nanjing, Jiangsu Province 210096, China. Email: {\tt yi.xu1983@hotmail.com}. This author's work is supported by National Natural Science Foundation of China Nos. 11501100, 11571178 and 11671082.}
\and
Jinjie Liu\thanks{Department of Applied Mathematics, The Hong Kong Polytechnic University, Kowloon, Hong Kong. Email: {\tt jinjie.liu@connect.polyu.hk}.}
\and
Liqun Qi\thanks{Department of Applied Mathematics, The Hong Kong Polytechnic University, Kowloon, Hong Kong. Email: {\tt liqun.qi@polyu.edu.hk}. This author's work is supported in part by the Hong Kong Research Grant Council Nos. PolyU  15302114, 15300715, 15301716 and 15300717.}
}
\date{\today}
\begin{document}
\maketitle

\begin{abstract}
  In this paper, a new class of positive semi-definite tensors, the MO tensor, is introduced. It is inspired by the structure of Moler matrix, a class of test matrices. Then we focus on two special cases in the MO-tensors: Sup-MO tensor and essential MO tensor. They are proved to be positive definite tensors. Especially, the  smallest H-eigenvalue of a Sup-MO tensor is positive and tends to zero as the dimension tends to infinity, and an essential MO tensor is also a completely positive tensor.

  \vskip 12pt
  \noindent {\bf Key words. } {Positive (semi-)definite tensor, completely positive tensor, H-eigenvalue, MO-tensor, MO-like tensor, Sup-MO value.}

  \vskip 12pt
  \noindent {\bf AMS subject classifications. }{15A18, 15A69, 15B99.}

\end{abstract}

\bigskip

\section{Introduction}

In recent decades, tensors, as the natural extension of matrices, have been more and more ubiquitous in a wide variety of applications, such as data analysis and mining, signal processing, computational biology and so on \cite{CZPA, KB}. The positive (semi-)definiteness of tensors is an intrinsic property for tensors, which is closely related with the nonnegative polynomials. In 2005, the concepts of positive (semi-)definiteness and eigenvalues of matrices were extended to tensors \cite{Q}. Positive (semi-)definite tensors have widely applications in polynomial optimization, spectral hypergraph theory, magnetic resonance imaging and so on \cite{Q, Q14, QYW}. Though the verification of positive (semi-)definite tensors has been shown to be NP-hard by Hillar and Lim \cite{HL}, the positive (semi-)definiteness of some tensors with special structures have been verified, such as even order symmetric diagonally dominated tensors, even order symmetric B tensor, even order symmetric $M$-tensors and so on, see \cite{LQ16, Q, QL17, QS, ZQZ}.

Furthermore, completely positive tensors, which is connected with nonnegative tensor factorization, also have significant applications in polynomial optimization problems, statistics, data analysis and so on. They were first introduced in \cite{QXX14}. An even order completely positive tensor is a positive semi-definite tensor. In \cite{QL}, two well-known classes of test matrices, Pascal matrices and Lehmer matrices were extended to Pascal tensors and Lehmer tensors. They are easily checkable and were proved to be completely positive tensors \cite{QL}.

There is another class of test matrices, the Moler matrices. A Moler matrix is a positive definite symmetric matrix.  One of its eigenvalues is quite small, and it is usually used for testing eigenvalue computations. In the following section, we prove that the smallest eigenvalue of a Moler matrix approaches 0, when the dimension of the Moler matrix, $n\rightarrow+\infty$. Inspired by the good properties of Moler matrix, we construct a new class of positive semi-definite tensors. Since there have been $M$-tensors already, we name this new class of tensor as MO tensor, which come from first two letters of Moler.

The remaining part of this paper is organized as follows. In Section 2, we review some basic concepts and preliminary results to help our work. The main results are given in Section 3.  In that section, we construct MO tensors, which include Sup-MO tensors and essential MO tensors with the concept of MO value.  Meanwhile, some related properties are proved.   Some final remarks are made in Section 4.

\section{Preliminaries}
In this part, we will review some basic definitions and lemmas which are useful in our following research.

We call $\mathcal{A}(n,m)$ a real $m$th order $n$-dimensional tensor if
$$\mathcal{A}(n,m)=(a_{i_1 \cdots i_m}), i_j\in[n], j\in[m],$$ where $[n] := \{1, \cdots, n\}$.
 All the real $m$th order $n$-dimensional tensors form a $n^{m}$-dimensional linear space denoted by $T_{n, m}.$ Furthermore, if all the entries $a_{i_1 \cdots i_m}$ of $\mathcal{A}(n,m)$ are invariant under any index permutation, $\mathcal{A}(n,m)$ is a symmetric tensor. Denote the set of all the real symmetric tensors of order $m$ and dimension $n$ by $S_{n, m}$. Obviously, $S_{n, m}$ is a linear subspace of $T_{n, m}.$

Let $\mathcal{A}(n,m)=(a_{i_1\cdots i_m})\in T_{n, m}$ and ${\bf x}\in\mathbb{R}^n.$ Then $\mathcal{A}(n,m){\bf x}^m$ is an $m$th degree homogeneous polynomial defined by
$$\mathcal{A}(n,m) {\bf x}^m=\sum_{i_1, i_2, \cdots, i_m=1}^{n} a_{i_1 \cdots i_m}x_{i_1}x_{i_2} \cdots x_{i_m}.$$
We call tensor $\mathcal{A}(n,m)$ as a positive definite ({\bf PD}) tensor if $\mathcal{A}(n,m){\bf x}^m > 0,$ for any non-zero vector ${\bf x}\in\mathbb{R}^n,$ and as positive semi-definite ({\bf PSD}) tensor if $\mathcal{A}(n,m){\bf x}^m\geq 0,$ for any vector ${\bf x}\in\mathbb{R}^n.$ Let ${\bf x} =(x_1, x_2, \cdots, x_n)^{\top}\in\mathbb{R}^n,$ then ${\bf x}^{[m]}$ is a vector in $\mathbb{R}^n$ denoted as
$${\bf x}^{[m]}=(x_1^m, x_2^m, \cdots, x_n^m)^\top .$$

To identify the positive (semi-)definiteness of tensors, the spectral theory of tensors plays an important role for the desired identification. In \cite{Q}, H-eigenvalues and Z-eigenvalues of tensors were introduced. It was shown there that an even order symmetric tensor is positive (semi-)definite if and only if all of its H-eigenvalues or Z-eigenvalues are positive (nonnegative).  Various easily checkable positive (semi-)definite tensors have been discovered consequently \cite{CLQ, LWZZL, QS, ZQZ}.

\begin{definition} \cite{Q}
Let $\mathcal{A}(n,m)\in T_{n, m}, \lambda\in\mathbb{R}.$ If $\lambda$ and a nonzero vector ${\bf x}\in\mathbb{R}^n$ are the solutions of the following polynomial equation:
$$\mathcal{A}(n,m) {\bf x}^{m-1}=\lambda {\bf x}^{[m-1]},$$
then we call $\lambda$ an {\bf H-eigenvalue} of $\mathcal{A}(n,m),$ and ${\bf x}$ an {\bf H-eigenvector} of $\mathcal{A}(n,m)$ associated with the H-eigenvalue $\lambda.$
\end{definition}

\begin{theorem}\label{theo-1} \cite{Q}
Let $\mathcal{A}(n,m)\in S_{n, m},$ and $m$ be even. Then $\mathcal{A}(n,m)$ is positive definite (positive semi-definite) if and only if all of the H-eigenvalues of $\mathcal{A}(n,m)$ are positive (non-negative). Furthermore, we have

(1)
$$\lambda_{min}(\mathcal{A}(n,m))=\min\frac{\mathcal{A}(n,m){\bf x}^m}{\|{\bf x}\|_m^m},$$

(2)
$$\lambda_{min}(\mathcal{A}(n,m))=\min \{\mathcal{A}(n,m){\bf x}^m:\|{\bf x}\|_m =1\},$$

where ${\bf x}\in\mathbb{R}^n,$ and $\|{\bf x}\|_m =(\sum_{i=1}^{n}|x_i|^m)^{\frac{1}{m}}.$
\end{theorem}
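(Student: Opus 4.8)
The plan is to identify the smallest H-eigenvalue $\lambda_{min}(\mathcal{A}(n,m))$ with the constrained minimum of the form $\mathcal{A}(n,m){\bf x}^m$ over the $\ell_m$-unit sphere, and then read off the positive (semi-)definiteness criterion. First I would note that, since $m$ is even, $\|{\bf x}\|_m^m=\sum_{i=1}^n|x_i|^m=\sum_{i=1}^n x_i^m$ is a smooth function of ${\bf x}$ and the set $S=\{{\bf x}\in\mathbb{R}^n:\|{\bf x}\|_m=1\}$ is compact (each coordinate obeys $|x_i|\le 1$, and $S$ is closed). As ${\bf x}\mapsto\mathcal{A}(n,m){\bf x}^m$ is continuous it attains a minimum on $S$ at some ${\bf x}^\ast$; write $\lambda^\ast$ for this minimum value. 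Because the numerator and denominator of $\mathcal{A}(n,m){\bf x}^m/\|{\bf x}\|_m^m$ are both homogeneous of degree $m$ (here using that $m$ is even so $t^m=|t|^m$), the infimum of this quotient over all nonzero ${\bf x}$ also equals $\lambda^\ast$, so the right-hand sides of (1) and (2) are the same number $\lambda^\ast$; it then remains to show $\lambda^\ast=\lambda_{min}(\mathcal{A}(n,m))$.

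For one inequality I would apply the Lagrange multiplier rule at ${\bf x}^\ast$. The constraint gradient there is $m({\bf x}^\ast)^{[m-1]}\neq 0$ (otherwise ${\bf x}^\ast=0$, contradicting $\|{\bf x}^\ast\|_m=1$), so there is $\mu\in\mathbb{R}$ with $\nabla(\mathcal{A}(n,m){\bf x}^m)=\mu\,\nabla(\|{\bf x}\|_m^m)$ at ${\bf x}^\ast$. Using the symmetry of $\mathcal{A}(n,m)$, this reads $m\,\mathcal{A}(n,m)({\bf x}^\ast)^{m-1}=\mu\,m\,({\bf x}^\ast)^{[m-1]}$, i.e. ${\bf x}^\ast$ is an H-eigenvector with H-eigenvalue $\mu$; contracting with ${\bf x}^\ast$ gives $\mu=\mathcal{A}(n,m)({\bf x}^\ast)^m=\lambda^\ast$. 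Hence $\lambda^\ast$ is an H-eigenvalue and $\lambda_{min}(\mathcal{A}(n,m))\le\lambda^\ast$. For the reverse inequality, if $\lambda$ is any H-eigenvalue with H-eigenvector ${\bf x}\neq 0$, then contracting $\mathcal{A}(n,m){\bf x}^{m-1}=\lambda{\bf x}^{[m-1]}$ with ${\bf x}$ yields $\lambda=\mathcal{A}(n,m){\bf x}^m/\|{\bf x}\|_m^m\ge\lambda^\ast$, where $\|{\bf x}\|_m^m>0$ because $m$ is even and ${\bf x}\neq 0$. Thus $\lambda_{min}(\mathcal{A}(n,m))=\lambda^\ast$, which is exactly (1) and (2).

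Finally, $\mathcal{A}(n,m)$ is positive definite iff $\mathcal{A}(n,m){\bf x}^m>0$ for every nonzero ${\bf x}$, which by homogeneity is equivalent to $\mathcal{A}(n,m){\bf x}^m>0$ on the compact set $S$, i.e. to $\lambda^\ast>0$; by the previous paragraph this is the same as $\lambda_{min}(\mathcal{A}(n,m))>0$, equivalently all H-eigenvalues of $\mathcal{A}(n,m)$ being positive. The positive semi-definite case is identical with $\ge$ in place of $>$. The step I expect to need the most care is the Lagrange-multiplier argument: verifying the constraint qualification at ${\bf x}^\ast$, using symmetry of $\mathcal{A}(n,m)$ to write $\nabla(\mathcal{A}(n,m){\bf x}^m)=m\,\mathcal{A}(n,m){\bf x}^{m-1}$, and — underpinning all of it — making sure the minimum over $S$ is genuinely attained so that an H-eigenvalue exists at all.
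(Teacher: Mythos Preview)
Your argument is correct and is essentially the standard proof of this result. Note, however, that the paper does not supply its own proof of this theorem: it is quoted from \cite{Q} as a preliminary fact, so there is nothing in the paper to compare your approach against. Your Lagrange-multiplier derivation, the compactness of the $\ell_m$-sphere (using that $m$ is even), and the homogeneity reduction from (1) to (2) are all sound; the only small addition worth making explicit is that the existence of an H-eigenvalue (needed for $\lambda_{\min}$ to be well defined) is itself a consequence of your argument that the constrained minimizer ${\bf x}^\ast$ produces one.
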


\begin{definition} \cite{QL}
Let $\mathcal{A}(n,m)\in S_{n, m}.$ We call $\mathcal{A}(n,m)$ a {\bf completely positive tensor} if there exist an integer $r$ and some ${\bf u}^{(k)}\in\mathbb{R}_{+}^n, k\in [r]$ such that $$\mathcal{A}(n,m)=\sum_{k=1}^{r}({\bf u}^{(k)})^m.$$
\end{definition}

\begin{theorem}\cite{QL}
Let $\mathcal{A}(n,m)\in S_{n, m}.$ If $\mathcal{A}(n,m)$ is a completely positive tensor, then all the H-eigenvalues of $\mathcal{A}$ are nonnegative.
\end{theorem}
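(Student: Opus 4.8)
The plan is to unfold the definition of a completely positive tensor directly inside the H-eigenvalue equation and then split according to the parity of $m$. Write $\mathcal{A} := \mathcal{A}(n,m) = \sum_{k=1}^{r}({\bf u}^{(k)})^m$ with each ${\bf u}^{(k)} = (u^{(k)}_1, \dots, u^{(k)}_n)^{\top} \in \mathbb{R}_+^n$. Every rank-one tensor $({\bf u}^{(k)})^m$ is symmetric, hence so is $\mathcal{A}$, and for each ${\bf x} \in \mathbb{R}^n$ one has
$$\mathcal{A}{\bf x}^m = \sum_{k=1}^{r}\Big(\sum_{i=1}^{n} u^{(k)}_i x_i\Big)^m, \qquad \big(\mathcal{A}{\bf x}^{m-1}\big)_j = \sum_{k=1}^{r} u^{(k)}_j\Big(\sum_{i=1}^{n} u^{(k)}_i x_i\Big)^{m-1}, \quad j \in [n].$$
Let $(\lambda, {\bf x})$ be an arbitrary H-eigenpair of $\mathcal{A}$, so ${\bf x} \neq 0$ and $\mathcal{A}{\bf x}^{m-1} = \lambda\,{\bf x}^{[m-1]}$.

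If $m$ is even, each summand $\big(\sum_i u^{(k)}_i x_i\big)^m$ above is an even power, so $\mathcal{A}{\bf x}^m \geq 0$ for all ${\bf x}$; thus $\mathcal{A}$ is a positive semi-definite symmetric tensor of even order, and Theorem \ref{theo-1} gives $\lambda \geq 0$ at once. (Equivalently, contracting the eigen-equation with ${\bf x}$ yields $\lambda\|{\bf x}\|_m^m = \mathcal{A}{\bf x}^m \geq 0$ with $\|{\bf x}\|_m^m > 0$.)

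If $m$ is odd, $\mathcal{A}{\bf x}^m$ need not be sign-definite, so I would instead localize at a coordinate of largest modulus: pick $j \in [n]$ with $|x_j| = \max_{i\in[n]}|x_i| > 0$. The $j$-th scalar equation of $\mathcal{A}{\bf x}^{m-1} = \lambda\,{\bf x}^{[m-1]}$, together with the second expansion above, reads
$$\lambda\,x_j^{m-1} = \sum_{k=1}^{r} u^{(k)}_j\Big(\sum_{i=1}^{n} u^{(k)}_i x_i\Big)^{m-1}.$$
Since $m-1$ is even, every factor $\big(\sum_i u^{(k)}_i x_i\big)^{m-1}$ is nonnegative and each $u^{(k)}_j \geq 0$, so the right-hand side is $\geq 0$; and $x_j^{m-1} > 0$ because $x_j \neq 0$ with $m-1$ even. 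Hence $\lambda \geq 0$. Combining the two cases, all H-eigenvalues of $\mathcal{A}$ are nonnegative.

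The proof is short; the only points requiring attention are that the naive ``$\mathcal{A}{\bf x}^m \geq 0$'' route is unavailable when $m$ is odd, which is what forces the largest-modulus coordinate trick (a standard device in diagonal-dominance-type eigenvalue estimates), and that one must record that $\mathcal{A}$ inherits symmetry from its summands before invoking Theorem \ref{theo-1} in the even case. I do not expect any genuine obstacle beyond this bookkeeping.
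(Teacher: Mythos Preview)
Your argument is correct. Note, however, that the paper itself does not supply a proof of this theorem: it is quoted from \cite{QL} and stated without demonstration, so there is no ``paper's own proof'' to compare against. What you have written is an independent, self-contained verification of the cited result.

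One small remark on the odd-$m$ case: the largest-modulus coordinate is more than you need. Since $m-1$ is even, \emph{any} index $j$ with $x_j\neq 0$ gives $x_j^{m-1}>0$, and the right-hand side is nonnegative for every $j$; so simply choosing any nonzero coordinate of ${\bf x}$ already yields $\lambda\geq 0$. The ``largest-modulus'' selection is harmless but not doing real work here (it is typically invoked when one needs to bound a sum against a diagonal term, which is not the situation in this proof).
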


\begin{definition} \cite{JN}
Let $\mathcal{A}(n,2)\in \mathbb{R}^{n\times n}$. We call $\mathcal{A}(n,2)$  the $n$-dimensional {\bf Moler matrix} if

$$\mathcal{A}(n,2)_{i,j}=\left\{
             \begin{array}{cc}
               i, & i=j  \\
               \min\{i,j\}-2, & i\neq j \\
             \end{array}
           \right..
$$
\end{definition}

In the following proposition, we give a proof to show that the Moler matrix is a positive definite matrix, and its smallest eigenvalue tends to zero in decreasing as its dimension tends to infinity.

\begin{proposition}
Let $\mathcal{A}(n,2)\in \mathbb{R}^{n\times n}$ be an $n$-dimensional  Moler matrix. (1) $\mathcal{A}(n,2)$ is positive definite; (2) Let $\lambda_{min}(\mathcal{A}(n,2))$ be the smallest eigenvalue of $\mathcal{A}(n,2).$ Then $\lambda_{min}(\mathcal{A}(n,2))\searrow 0.$
\end{proposition}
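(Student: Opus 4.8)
The plan is to exhibit $\mathcal{A}(n,2)$ as a Gram matrix. Let $U_n\in\mathbb{R}^{n\times n}$ be the upper triangular matrix with $(U_n)_{ii}=1$ for all $i$ and $(U_n)_{ij}=-1$ whenever $i<j$. Evaluating $(U_n^{\top}U_n)_{ij}=\sum_{k=1}^{n}(U_n)_{ki}(U_n)_{kj}$ (only indices $k\le\min\{i,j\}$ contribute): for $i=j$ the sum is $(i-1)\cdot 1+1=i$, and for $i<j$ it is $(i-1)\cdot 1+1\cdot(-1)=i-2=\min\{i,j\}-2$; by symmetry the case $i>j$ is the same. Hence $U_n^{\top}U_n=\mathcal{A}(n,2)$. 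Since $U_n$ is triangular with unit diagonal it is nonsingular, so $\mathbf{x}^{\top}\mathcal{A}(n,2)\mathbf{x}=\|U_n\mathbf{x}\|_2^2>0$ for every nonzero $\mathbf{x}\in\mathbb{R}^n$, which proves (1).

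For (2) I would first establish monotonicity. Deleting the last row and column of $\mathcal{A}(n,2)$ leaves precisely $\mathcal{A}(n-1,2)$, so the Cauchy interlacing theorem gives $\lambda_{min}(\mathcal{A}(n,2))\le\lambda_{min}(\mathcal{A}(n-1,2))$. Thus $\{\lambda_{min}(\mathcal{A}(n,2))\}_{n\ge 1}$ is non-increasing, and by (1) it is bounded below by $0$; hence it converges to some $L\ge 0$, and it remains only to prove $L=0$.

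Because $\lambda_{min}(\mathcal{A}(n,2))=\sigma_{min}(U_n)^2$, where $\sigma_{min}$ denotes the smallest singular value, it suffices to produce vectors $\mathbf{x}^{(n)}\neq 0$ with $\|U_n\mathbf{x}^{(n)}\|_2/\|\mathbf{x}^{(n)}\|_2\to 0$. Solving $U_n\mathbf{x}^{(n)}=\mathbf{e}_n$ by back substitution gives $x^{(n)}_n=1$ and $x^{(n)}_{n-k}=2^{k-1}$ for $1\le k\le n-1$, so $\|U_n\mathbf{x}^{(n)}\|_2^2=1$ while $\|\mathbf{x}^{(n)}\|_2^2=1+\sum_{k=1}^{n-1}4^{k-1}=\tfrac{1}{3}(4^{n-1}+2)$. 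By the Rayleigh quotient bound (Theorem \ref{theo-1}(1) with $m=2$),
$$0<\lambda_{min}(\mathcal{A}(n,2))\le\frac{\|U_n\mathbf{x}^{(n)}\|_2^2}{\|\mathbf{x}^{(n)}\|_2^2}=\frac{3}{4^{n-1}+2},$$
which tends to $0$ as $n\to\infty$. Combined with the monotonicity above, this yields $\lambda_{min}(\mathcal{A}(n,2))\searrow 0$.

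The computations here — the Gram identity and the back substitution — are routine; the step that really carries the argument is recognizing the factorization $\mathcal{A}(n,2)=U_n^{\top}U_n$ and then guessing the near-null direction $\mathbf{x}^{(n)}$, the doubling pattern $x^{(n)}_{n-k}=2^{k-1}$, which forces $\|U_n^{-1}\|_2$ to grow geometrically. That guess is the part I expect to demand the most thought; once it is available, interlacing and a finite geometric series finish everything.
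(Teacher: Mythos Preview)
Your proof is correct and follows essentially the same route as the paper: the paper factors $\mathcal{A}(n,2)=LL^{\top}$ with $L$ lower triangular having $1$'s on the diagonal and $-1$'s below (your $U_n$ is exactly $L^{\top}$, so the Gram factorization is identical), and then bounds the Rayleigh quotient with a geometric test vector $\mathbf{x}=(1,\tfrac12,\dots,\tfrac{1}{2^{n-1}})^{\top}$, obtaining $\lambda_{\min}\le 3n/(4^n-1)$. Your test vector, obtained by solving $U_n\mathbf{x}^{(n)}=\mathbf{e}_n$, is a minor variant of the same doubling idea and even yields a slightly sharper bound; the monotonicity step, which the paper states without argument, you justify via Cauchy interlacing.
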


\begin{proof}

(1) We note that $\mathcal{A} (n,2)=LL^\top$,
where

$$L_{i,j}=\left\{
      \begin{array}{cc}
        1, & i=j \\
        -1, & i>j \\
        0, & i<j \\
      \end{array}
    \right..
$$

Therefore, $\mathcal{A}(n,2)$ is positive definite.

(2) $0 < \lambda_{min}(\mathcal{A}(n+1,2))\leq\lambda_{min}(\mathcal{A}(n,2))$ are easily obtained. Assume that $ {\bf x}=\left(1,\frac{1}{2},\frac{1}{2^2},\cdots,\frac{1}{2^{n-1}}\right)^\top.$ We have $\displaystyle L^\top {\bf x} =\left(                                                                                                                                                        \begin{array}{c}
\displaystyle\frac{1}{2^{n-1}}  \\
\vdots\\
\displaystyle\frac{1}{2^{n-1}}  \\
\end{array}                                                                                                                                                     \right),$  $$\frac{{\bf x}^\top \mathcal{A}(n,2){\bf x}}{{\bf x}^\top {\bf x}}=\frac{3n}{4^n-1}\geq \lambda_{min}(\mathcal{A}(n,2)),$$ which means $\lambda_{min}(\mathcal{A}(n,2))\searrow 0,$ when $n\rightarrow +\infty.$
\end{proof}

\section{Main Results}

In order to introduce a new class of tensors with positive semi-definiteness, we first introduce concepts called the {\bf MO value}, the {\bf MO set} and the {\bf Sup-MO value}.

\begin{definition}
(1) Let $m$ be an even number. We call $\alpha(m)$ as the {\bf MO value}, if $\mathcal{A}(n,m) := \mathcal{M}(n,m)-\alpha(m)\mathcal{N}(n,m)$ is positive semi-definite for any $n$, where
\begin{equation}\label{1}
\mathcal{M}(n,m)_{i_1,i_2,\cdots,i_m}=\left\{
\begin{array}{cc}
i_1,&i_1=i_2=\cdots=i_m\\
    \min\{i_1,i_2,\cdots,i_m\}, & else \\
 \end{array}
 \right.,
\end{equation}

\begin{equation}\label{2}
\mathcal{N}(n,m)_{i_1,i_2,\cdots,i_m}=\left\{
 \begin{array}{cc}
  0, & i_1=i_2=\cdots=i_m \\
  1, & else \\
  \end{array}
   \right..
\end{equation}
   We call the set of all MO values as the {\bf MO set} $\Omega(m);$
(2) We call $\alpha^*(m)=\sup\{\Omega(m)\}$ as the {\bf Sup-MO value}.  We also define {\bf Sub-MO value} $\alpha_*(m)$ of $\Omega(m)$ as $\alpha_*(m)=\inf\{\Omega(m)\}$.
\end{definition}

It is worth noting that $\alpha(m)$ is a parameter only related to $m$. Hence, when we consider to explore its properties, it is necessary to show these properties still hold when $n\rightarrow\infty$.

In this paper, we mainly focus on the properties of $\alpha^*(m)$.
Based on the aforementioned concepts,  MO tensors and Sup-MO tensors are given as following.

\begin{definition}
Let $m$ be an even number. We call $\mathcal{A}(n,m)\in S_{n,m}$ a {\bf MO tensor}, if $$\mathcal{A}(n,m)=\mathcal{M}(n,m)-\alpha(m)\mathcal{N}(n,m),$$
where $\mathcal{M}(n,m)$ and $\mathcal{N}(n,m)$ are defined in $(1)$ and $(2)$, respectively, and $\alpha(m)\in\Omega(m).$

(2) We call $\mathcal{A}(n,m)\in S_{n,m}$ a {\bf Sup-MO tensor}, if $$\mathcal{A}(n,m)=\mathcal{M}(n,m)-\alpha^*(m)\mathcal{N}(n,m).$$

\end{definition}

The theorem in the following shows that the Sup-MO tensor $\mathcal{A}(n, m)$ can be reduced to the Moler matrix when $m = 2$.

\begin{theorem}\label{p-1}
Let $\Omega(m)$ be the MO set, $\mathcal{A}(n,m)\in S_{n,m}$ be a Sup-MO tensor.
When $m=2$, we have $\alpha^*(2)=2=\max\{\Omega(2)\}.$
\end{theorem}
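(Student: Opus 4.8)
The plan is to work out exactly what $\mathcal{M}(n,2)$ and $\mathcal{N}(n,2)$ are and then translate the statement ``$\alpha \in \Omega(2)$'' into a concrete spectral condition. From the definitions, $\mathcal{M}(n,2)_{i,j} = \min\{i,j\}$ for all $i,j$ (the diagonal entry $i$ is just $\min\{i,i\}$, so there is no special case when $m=2$), and $\mathcal{N}(n,2)_{i,j} = 1$ for $i \neq j$ and $0$ on the diagonal, i.e. $\mathcal{N}(n,2) = J_n - I_n$ where $J_n$ is the all-ones matrix. Hence $\mathcal{A}(n,2) = \mathcal{M}(n,2) - \alpha(J_n - I_n)$, and $\alpha \in \Omega(2)$ means this symmetric matrix is positive semi-definite for every $n$. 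The key observation is that when $\alpha = 2$ we recover precisely the Moler matrix: the diagonal entry is $i - 2\cdot 0 = i$ and the off-diagonal entry is $\min\{i,j\} - 2$, matching the definition of $\mathcal{A}(n,2)$ in the Moler matrix definition. By the Proposition already proved in the excerpt, the Moler matrix is positive definite for every $n$, so $2 \in \Omega(2)$.

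It then remains to show $2 = \sup\{\Omega(2)\}$ and moreover that the supremum is attained (which, given $2 \in \Omega(2)$, is automatic once we show no $\alpha > 2$ lies in $\Omega(2)$). First I would observe that $\Omega(2)$ is an interval unbounded below: if $\mathcal{M}(n,2) - \alpha(J_n - I_n) \succeq 0$ and $\beta \le \alpha$, then $\mathcal{M}(n,2) - \beta(J_n - I_n) = [\mathcal{M}(n,2) - \alpha(J_n-I_n)] + (\alpha - \beta)(J_n - I_n)$, and $J_n - I_n$ is... not positive semi-definite, so this monotonicity argument needs care. Instead I would argue directly: for $\alpha > 2$, exhibit a dimension $n$ and a vector ${\bf x}$ with ${\bf x}^\top [\mathcal{M}(n,2) - \alpha(J_n - I_n)]{\bf x} < 0$. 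A natural candidate already appears in the excerpt's proof of the Proposition: using the $LL^\top$ factorization of the Moler matrix, ${\bf x}^\top \mathcal{M}(n,2){\bf x}$ and ${\bf x}^\top(J_n - I_n){\bf x} = ({\bf 1}^\top {\bf x})^2 - \|{\bf x}\|^2$ can both be computed on simple test vectors. Taking ${\bf x}$ supported on two coordinates, say $x_i = x_j = 1$ and otherwise zero with $i < j$, gives quadratic form value $2\min\{i,j\} + i - 2\alpha = 3i - 2\alpha$ after combining diagonal and off-diagonal contributions; wait, more carefully: $\mathcal{M}$ contributes $i + j + 2\min\{i,j\} = i + j + 2i$ and $-\alpha(J-I)$ contributes $-2\alpha$, total $3i + j - 2\alpha$, which for fixed $\alpha$ is minimized at $i = j = 1$ giving $4 - 2\alpha < 0$ exactly when $\alpha > 2$. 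That single $2$-dimensional witness already shows $\alpha \notin \Omega(2)$ for all $\alpha > 2$.

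Putting the two halves together: $2 \in \Omega(2)$ (Moler matrix is PD for all $n$) and every $\alpha > 2$ fails already at $n = 2$, so $\Omega(2) \subseteq (-\infty, 2]$ with $2 \in \Omega(2)$, giving $\alpha^*(2) = \sup\{\Omega(2)\} = \max\{\Omega(2)\} = 2$. I do not anticipate a serious obstacle here; the only point requiring a little attention is being careful about the combinatorics of which index is the minimum when evaluating the quadratic form on the two-coordinate test vector, and confirming that the bound $4 - 2\alpha$ is indeed negative for $\alpha$ strictly above $2$ while the Moler matrix ($\alpha = 2$) remains positive definite uniformly in $n$ — both of which are immediate from what is already established.
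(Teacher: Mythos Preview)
Your identification of $2\in\Omega(2)$ via the Moler matrix is fine and matches the paper. The problem is in the second half, where you try to rule out $\alpha>2$ with a two-coordinate test vector. There is first an arithmetic slip: you impose $i<j$ but then write ``minimized at $i=j=1$ giving $4-2\alpha$.'' With $i<j$ the correct minimum of $3i+j$ is $5$ (at $i=1,j=2$), so your test vector only yields $5-2\alpha$, which is negative merely for $\alpha>5/2$, not for all $\alpha>2$.

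More seriously, no fixed-$n$ witness can do the job. For each fixed $n$ the Moler matrix $\mathcal{M}(n,2)-2\mathcal{N}(n,2)$ is \emph{strictly} positive definite (Proposition in the paper), so by continuity $\mathcal{M}(n,2)-\alpha\mathcal{N}(n,2)$ stays positive semi-definite for $\alpha$ in an open neighborhood of $2$; concretely, at $n=2$ the matrix $\begin{pmatrix}1 & 1-\alpha\\ 1-\alpha & 2\end{pmatrix}$ is PSD for all $\alpha\le 1+\sqrt{2}\approx 2.414$. Thus the constraint ``PSD for \emph{every} $n$'' is essential, and one must let $n\to\infty$. The paper does exactly this: for $\alpha>2$ it writes $\mathcal{M}-\alpha\mathcal{N}=(\mathcal{M}-2\mathcal{N})-(\alpha-2)\mathcal{N}$ and evaluates both pieces on ${\bf x}=(1,\tfrac12,\dots,\tfrac{1}{2^{n-1}})^\top$, obtaining ${\bf x}^\top(\mathcal{M}-2\mathcal{N}){\bf x}=\tfrac{3n}{4^n-1}\to 0$ while ${\bf x}^\top\mathcal{N}{\bf x}\to\tfrac{8}{3}>0$, so the full quadratic form is eventually negative. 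Your proposal is missing precisely this asymptotic ingredient.
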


\begin{proof}
 From the property of the Moler matrix, we get $2 \in \Omega(2)$. Then we need to prove that 2 is the Sup-MO value in this case. If $\alpha^*(2)>2,$ then there exists an $\alpha\in (2,\alpha^*(2))\cap\Omega(2)$ such that $$\mathcal{M}(n,2)-\alpha\mathcal{N}(n,2)=\mathcal{M}(n,2)-2\mathcal{N}(n,2)-(\alpha-2)\mathcal{N}(n,2),$$
  where $\mathcal{M}(n,m)$ and $\mathcal{N}(n,m)$ are defined in $(1)$ and $(2)$, respectively.

 Let ${\bf x}=\left(1,\displaystyle\frac{1}{2},\cdots,\displaystyle\frac{1}{2^{n-1}}\right)^\top \in\mathbb{R}^n$, $${\bf x}^\top (\mathcal{M}(n,2)-\alpha\mathcal{N}(n,2)){\bf x}={\bf x}^\top (\mathcal{M}(n,2)-2\mathcal{N}(n,2)){\bf x}-(\alpha-2){\bf x}^\top \mathcal{N}(n,2){\bf x}.$$  Since
$${\bf x}^\top (\mathcal{M}(n,2)-2\mathcal{N}(n,2)){\bf x}=\displaystyle\frac{3n}{4^n-1}, \ {\rm and}\ {\bf x}^\top \mathcal{N}(n,2){\bf x}= \displaystyle\frac{8}{3}-\frac{4}{2^{n-1}}+\frac{4}{3(4^{n-1})},$$ when $n\rightarrow +\infty$, we have $${\bf x}^\top (\mathcal{M}(n,2)-\alpha\mathcal{N}(n,2)){\bf x}<0,$$ which is against to  $\alpha\in \Omega(2).$ Hence $\alpha^*(2)=2.$
\end{proof}

In the following work, a special class of MO tensors, the essential MO tensors are discussed. The following theorem shows the relationship between the essential MO tensors and the completely positive tensors. Then some properties of Sup-MO values in MO tensors are given.

\begin{definition}
Let $\mathcal{A}(n,m)\in S_{n,m}$. We call $\mathcal{A}(n,m)$ the $m$th order $n$-dimensional {\bf essential MO tensor}  if

$$\mathcal{A}(n,m)_{i_1,\cdots,i_m}=\left\{
             \begin{array}{cc}
               i_1, & i_1=i_2=\cdots=i_m  \\
               \min\{i_1,i_2,\cdots,i_m\}-1, & otherwise \\
             \end{array}
           \right..
$$
\end{definition}

\begin{theorem}\label{thm3.5}
Let $\mathcal{A}(n,m)$ be an $m$th order $n$-dimensional essential MO tensor. It is positive definite for all $n$, even $m.$ Furthermore, it is a completely positive tensor for all $n$ and $m$, such as
$\mathcal{A}(n,m)=\sum\limits_{i=1}^n {\bf e}^m_i+\sum\limits_{i=2}^n {\bf r}^m_i$, where $({\bf e}_i)_j=\left\{
\begin{array}{cc}
   1, & j=i \\
   0, & otherwise \\
    \end{array}
  \right.,
$ $({\bf r}_i)_j=\left\{
            \begin{array}{cc}
              1, & j\geq i \\
              0, & otherwise \\
 \end{array}
  \right.$.
\end{theorem}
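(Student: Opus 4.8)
The plan is to verify directly the explicit decomposition $\mathcal{A}(n,m)=\sum_{i=1}^n {\bf e}_i^m+\sum_{i=2}^n {\bf r}_i^m$ asserted in the statement, and then read off both conclusions from it. First I would compute the $(i_1,\dots,i_m)$ entry of the right-hand side, using that for a vector ${\bf u}\in\mathbb{R}^n$ the rank-one symmetric tensor ${\bf u}^m$ has entries $u_{i_1}\cdots u_{i_m}$. Thus $({\bf e}_i^m)_{i_1\cdots i_m}$ equals $1$ when $i_1=\cdots=i_m=i$ and $0$ otherwise, so $\big(\sum_{i=1}^n {\bf e}_i^m\big)_{i_1\cdots i_m}$ is $1$ if all indices coincide and $0$ otherwise. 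Likewise $({\bf r}_i^m)_{i_1\cdots i_m}=1$ exactly when $\min\{i_1,\dots,i_m\}\ge i$, whence $\big(\sum_{i=2}^n {\bf r}_i^m\big)_{i_1\cdots i_m}=\#\{i:2\le i\le \min\{i_1,\dots,i_m\}\}=\min\{i_1,\dots,i_m\}-1$. Adding the two contributions gives $1+(i_1-1)=i_1$ on the diagonal (where the minimum is $i_1$) and $0+(\min\{i_1,\dots,i_m\}-1)$ off it, which is exactly the definition of the essential MO tensor.

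Given the decomposition, complete positivity is immediate: each ${\bf e}_i$ and each ${\bf r}_i$ lies in $\mathbb{R}_+^n$, so $\mathcal{A}(n,m)$ is a finite sum of $m$th powers of nonnegative vectors, which is precisely the defining property of a completely positive tensor; this requires no assumption on $n$ or on the parity of $m$.

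For positive definiteness when $m$ is even, I would evaluate the associated form. Since ${\bf e}_i^\top{\bf x}=x_i$ and ${\bf r}_i^\top{\bf x}=x_i+x_{i+1}+\cdots+x_n$, the decomposition gives
$$\mathcal{A}(n,m){\bf x}^m=\sum_{i=1}^n x_i^m+\sum_{i=2}^n\Big(\sum_{j=i}^n x_j\Big)^m.$$
With $m$ even, every summand is nonnegative and $\sum_{i=1}^n x_i^m=\|{\bf x}\|_m^m$, which is strictly positive whenever ${\bf x}\ne 0$. Hence $\mathcal{A}(n,m){\bf x}^m>0$ for all nonzero ${\bf x}\in\mathbb{R}^n$, i.e. $\mathcal{A}(n,m)$ is positive definite for all $n$ and even $m$. (One could instead deduce positive semi-definiteness from complete positivity via the theorem following Definition 2.4, but upgrading to strict positivity still needs the one-line observation about $\sum_i x_i^m$.)

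There is no real obstacle here: once the candidate decomposition is written down, both claims reduce to direct verification. The only point demanding a little care is the bookkeeping in the ${\bf r}_i$-sum — recognizing that starting the summation at $i=2$ rather than $i=1$ is exactly what produces the shift $\min\{i_1,\dots,i_m\}-1$ instead of $\min\{i_1,\dots,i_m\}$, so that the off-diagonal entries come out correctly while the diagonal entries still add up to $i_1$.
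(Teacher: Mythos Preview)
Your proof is correct, and it is in fact a cleaner route than the one in the paper. Rather than verifying the decomposition directly, the paper proceeds by induction on $n$: it peels off the tensor $\mathcal{B}(n,m)={\bf e}_1^m+{\bf r}_2^m$, observes that the remainder $\mathcal{C}(n,m)=\mathcal{A}(n,m)-\mathcal{B}(n,m)$ vanishes whenever some index equals $1$, and that its restriction to indices $\{2,\dots,n\}$ is precisely the $(n-1)$-dimensional essential MO tensor; complete positivity (and positive definiteness) then follow from the base case $\mathcal{A}(1,m)=1$. Unwinding that induction recovers exactly the decomposition stated in the theorem, so the two arguments are ultimately equivalent. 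Your direct entry-by-entry check is more economical once the candidate decomposition is given, and your explicit evaluation $\mathcal{A}(n,m){\bf x}^m=\sum_i x_i^m+\sum_{i\ge 2}(\sum_{j\ge i}x_j)^m\ge \|{\bf x}\|_m^m$ makes the strict positivity transparent, whereas the paper's one-line ``and also a positive definite tensor'' leaves that step to the reader.
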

\begin{proof}
 Let $\mathcal{B}(n,m)_{i_1,\cdots,i_m}=\left\{
                               \begin{array}{cc}
                                 1, & i_1=i_2=\cdots=i_m=1 \\
                                 1, &  i_1,i_2,\cdots,i_m\geq 2\\
                                 0, & otherwise \\
                               \end{array}
                             \right.
$.

Since $\mathcal{B}(n,m)=(1,0,\cdots,0)^m+(0,1,\cdots,1)^m,$ $\mathcal{B}(n,m)$ is a complete positive tensor.
Let $\mathcal{C}(n,m)=\mathcal{A}(n,m)-\mathcal{B}(n,m).$ Then $$\mathcal{C}(n,m)=\left\{
\begin{array}{cc}
  i_1-1, & i_1=i_2=\cdots=i_m\geq 2 \\
      0, & \min\{i_1,i_2,\cdots,i_m\}=1 \\
          \min\{i_1,i_2,\cdots,i_m\}-2, & otherwise   \\
            \end{array}
             \right..
$$
Let $\mathcal{A}(n-1,m)_{i_1,i_2,\cdots,i_m}=\mathcal{C}(n,m)_{i_1+1,i_2+1,\cdots,i_m+1}, i_j\in [n-1],j\in[m].$ Then $\mathcal{A}(n-1,m)$ is an $m$th order $n-1$ dimensional essential MO tensor. Furthermore, if $\mathcal{A}(n-1,m)$ is the completely positive tensor, $\mathcal{A}(n,m)$ is the completely positive tensor.

By the same way, we could get $\mathcal{A}(i,m), i\in [n],$ are all essential MO tensors.
When $n=1$, $\mathcal{A}(1,m)$ is equal to the positive number 1. By induction, we get that $\mathcal{A}(n,m)$ is a completely positive tensor and also a positive definite tensor.
\end{proof}

\begin{corollary}\label{thm3.6}
\begin{flushleft}
(1) Let $\Omega(m)$ be the MO set, and $\mathcal{M}(n,m), \mathcal{N}(n,m)$ be defined in $(1)$ and $(2)$, respectively.
For all $n$ and even $m$, $1$ is always a MO value, i.e., $1\in\Omega(m)$.

(2) $\mathcal{M}(n,m)-\alpha\mathcal{N}(n,m)$ is completely positive for all $n$ and $m$, while  $\alpha\in [0, 1].$
\end{flushleft}
\end{corollary}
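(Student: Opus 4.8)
The plan is to derive both parts directly from Theorem \ref{thm3.5}, after recognizing that $\mathcal{M}(n,m)-\mathcal{N}(n,m)$ is exactly the essential MO tensor and that $\mathcal{M}(n,m)$ is itself completely positive.

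First, for part (1), I would compare the definitions: by $(1)$ and $(2)$, the tensor $\mathcal{M}(n,m)-\mathcal{N}(n,m)$ has diagonal entries $i_1-0=i_1$ and off-diagonal entries $\min\{i_1,\dots,i_m\}-1$, which is precisely the defining table of the $m$th order $n$-dimensional essential MO tensor. Hence Theorem \ref{thm3.5} applies verbatim: $\mathcal{M}(n,m)-\mathcal{N}(n,m)$ is completely positive for all $n$ and $m$, and positive definite for even $m$; in particular, for even $m$ it is positive semi-definite, so $1\in\Omega(m)$.

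Second, I would show $\mathcal{M}(n,m)$ is completely positive for all $n$ and $m$. Observe that in $(1)$ the two cases agree, since $i_1=\dots=i_m$ forces $\min\{i_1,\dots,i_m\}=i_1$; thus $\mathcal{M}(n,m)_{i_1,\dots,i_m}=\min\{i_1,\dots,i_m\}$ for every index tuple. Using the elementary identity
$$\min\{i_1,\dots,i_m\}=\sum_{k=1}^{n}\prod_{j=1}^{m}\big[\,i_j\ge k\,\big],$$
where $[\,i_j\ge k\,]$ equals $1$ if $i_j\ge k$ and $0$ otherwise, I obtain $\mathcal{M}(n,m)=\sum_{k=1}^{n}{\bf r}_k^m$, with ${\bf r}_k$ the $0$–$1$ vectors from Theorem \ref{thm3.5} (with the convention ${\bf r}_1=(1,\dots,1)^\top$). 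So $\mathcal{M}(n,m)$ is a sum of $m$th powers of nonnegative vectors, hence completely positive.

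Finally, for part (2), for $\alpha\in[0,1]$ I would write
$$\mathcal{M}(n,m)-\alpha\mathcal{N}(n,m)=(1-\alpha)\,\mathcal{M}(n,m)+\alpha\big(\mathcal{M}(n,m)-\mathcal{N}(n,m)\big).$$
Both tensors on the right are completely positive — the first by the $\min$-sum decomposition, the second by the identification with the essential MO tensor — a nonnegative scalar multiple of a completely positive tensor is again completely positive (since $c\,{\bf u}^m=(c^{1/m}{\bf u})^m$ for $c\ge 0$), and a sum of completely positive tensors is completely positive; assembling these gives the claim for all $n$, $m$, and $\alpha\in[0,1]$. Note that part (1) is then also the special case $\alpha=1$, combined with the fact that an even-order completely positive tensor is positive semi-definite. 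There is no substantive obstacle here: the only points requiring care are spotting the two structural decompositions and checking that scaling by a nonnegative constant preserves complete positivity.
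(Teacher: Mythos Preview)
Your proof is correct and follows essentially the same path as the paper: part (1) is identical (identify $\mathcal{M}(n,m)-\mathcal{N}(n,m)$ as the essential MO tensor and invoke Theorem \ref{thm3.5}), and for part (2) both arguments rest on the same rank-one decomposition $\mathcal{M}(n,m)=\sum_{k=1}^n {\bf r}_k^m$ (the paper writes this as $\sum_{i=2}^n {\bf r}_i^m + {\bf e}^m$, noting ${\bf r}_1={\bf e}$). The only cosmetic difference is that you package the conclusion as a convex combination $(1-\alpha)\mathcal{M}+\alpha(\mathcal{M}-\mathcal{N})$ of two completely positive tensors, whereas the paper substitutes the decomposition $\mathcal{N}(n,m)={\bf e}^m-\sum_i {\bf e}_i^m$ directly to display the explicit CP decomposition $\mathcal{M}(n,m)-\alpha\mathcal{N}(n,m)=\sum_{i=2}^n {\bf r}_i^m+(1-\alpha){\bf e}^m+\alpha\sum_{i=1}^n {\bf e}_i^m$; expanding your convex combination yields exactly this formula.
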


\begin{proof}
 (1) Since $\mathcal{M}(n,m)-\mathcal{N}(n,m)$ is an essential MO tensor for all $n$ and even $m$, we get that $1$ is a MO value.

 (2) Since $\mathcal{N}(n,m)={\bf e}^m-\sum\limits_{i=1}^n {\bf e}^m_i$, $$\mathcal{M}(n,m)=\sum\limits_{i=1}^n {\bf e}^m_i+\sum\limits_{i=2}^n {\bf r}^m_i+{\bf e}^m-\sum\limits_{i=1}^n {\bf e}^m_i=\sum\limits_{i=2}^n {\bf r}^m_i+{\bf e}^m,$$
where ${\bf e}=(1,1,\cdots,1)^{\top}.$
So if $\alpha \in [0, 1]$, $$\mathcal{M}(n,m)-\alpha\mathcal{N}(n,m)=\sum\limits_{i=2}^n {\bf r}^m_i+(1-\alpha){\bf e}^m+\alpha\sum\limits_{i=1}^n {\bf e}^m_i$$ is completely positive.
\end{proof}

\begin{corollary}\label{thm3.65}
$\alpha_*(m)$ exists, and $-\frac{1}{2}\leq\alpha_*(m)\leq0.$
\end{corollary}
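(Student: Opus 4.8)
The plan is to sandwich the MO set $\Omega(m)$ between two explicit bounds, using Corollary~\ref{thm3.6} for the left end of the interval $[0,1]$ and a single well-chosen test vector for the negative end. Throughout, $m$ is even, so a symmetric tensor is positive semi-definite precisely when the associated form $\mathcal{A}(n,m){\bf x}^m$ is nonnegative for all ${\bf x}$; and, as recorded in the proof of Corollary~\ref{thm3.6}, one has $\mathcal{N}(n,m)={\bf e}^m-\sum_{i=1}^n{\bf e}_i^m$ and $\mathcal{M}(n,m)=\sum_{i=2}^n{\bf r}_i^m+{\bf e}^m$, where ${\bf e}=(1,\dots,1)^\top$.

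For nonemptiness and the upper bound $\alpha_*(m)\le 0$: by Corollary~\ref{thm3.6}(2), $\mathcal{M}(n,m)-\alpha\mathcal{N}(n,m)$ is completely positive, hence positive semi-definite, for every $n$ and every $\alpha\in[0,1]$; in particular $0\in\Omega(m)$, so $\Omega(m)\ne\emptyset$ and therefore $\inf\Omega(m)\le 0$.

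For the lower bound $\alpha_*(m)\ge-\frac{1}{2}$ (which simultaneously forces $\Omega(m)$ to be bounded below, and hence $\alpha_*(m)$ to be a genuine real number), I would show that no $\alpha<-\frac{1}{2}$ is a MO value. Take $n=2$ and ${\bf x}=(1,-1)^\top$. Since ${\bf e}^\top{\bf x}=0$ and ${\bf r}_2^\top{\bf x}=-1$, one gets $\mathcal{M}(2,m){\bf x}^m=({\bf r}_2^\top{\bf x})^m+({\bf e}^\top{\bf x})^m=(-1)^m=1$ and $\mathcal{N}(2,m){\bf x}^m=({\bf e}^\top{\bf x})^m-1^m-(-1)^m=-2$, whence
$$\big(\mathcal{M}(2,m)-\alpha\mathcal{N}(2,m)\big){\bf x}^m=1-\alpha(-2)=1+2\alpha<0 .$$
Thus $\mathcal{M}(2,m)-\alpha\mathcal{N}(2,m)$ is not positive semi-definite, so $\alpha\notin\Omega(m)$. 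Consequently $\Omega(m)\subseteq[-\frac{1}{2},\infty)$, the infimum $\alpha_*(m)=\inf\Omega(m)$ exists as a real number, and $-\frac{1}{2}\le\alpha_*(m)\le 0$.

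I do not expect a real obstacle here: the computation at $n=2$ is elementary, and what remains is only to read off correctly both the existence claim and the two inequalities. The only points needing a little care are to observe that the single inequality obtained for $\alpha<-\frac{1}{2}$ already yields boundedness below (needed for the assertion that $\alpha_*(m)$ exists) together with the numerical bound, and that checking $n=2$ is enough because a MO value is required to work for \emph{all} $n$.
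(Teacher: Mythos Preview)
Your proof is correct and follows essentially the same route as the paper: the upper bound comes from Corollary~\ref{thm3.6}(2), and the lower bound from evaluating $(\mathcal{M}-\alpha\mathcal{N}){\bf x}^m=1+2\alpha$ at the test vector ${\bf x}=(1,-1,0,\dots,0)^\top$ (you simply take $n=2$ instead of padding with zeros, which is equivalent). Your version is in fact slightly more detailed, since you verify the value $1+2\alpha$ via the decompositions $\mathcal{M}=\sum_{i\ge2}{\bf r}_i^m+{\bf e}^m$ and $\mathcal{N}={\bf e}^m-\sum_i{\bf e}_i^m$, whereas the paper just states it.
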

\begin{proof}
From Corollary \ref{thm3.6}(2), $\alpha_*(m)\leq 0$. Let ${\bf x}=(1,-1,0,\cdots,0)^{\top},$ when $\alpha<-\frac{1}{2}$, $$(\mathcal{M}(n,m)-\alpha\mathcal{N}(n,m)){\bf x}^m=1+2\alpha<0,$$ so $\alpha_*(m)=\inf\{\Omega(m)\}$ exists, and $\alpha_*(m) \geq -\frac{1}{2}.$
\end{proof}

Based on Theorem \ref{thm3.5} and Corollary \ref{thm3.6}, the MO set $\Omega(m)$ is nonempty.
\begin{proposition}\label{pro-alp}
Let $\Omega(m)$ be the MO set, and $\mathcal{M}(n,m), \mathcal{N}(n,m)$ be defined in (\ref{1}) and (\ref{2}), respectively. Then,
\begin{flushleft}
(1) for any $\alpha_1(m), \alpha_2(m)\in \Omega(m),$ $[\alpha_1(m),\alpha_2(m)]\subseteq \Omega(m);$\\
(2) for all even $m\geq 4,$ $1<\alpha^*(m)<2;$\\
(3) $\alpha^*(m)=\max\{\Omega(m)\};$\\
(4) $\alpha^*(m)\searrow 1,$ when $m\rightarrow+\infty.$
\end{flushleft}
\end{proposition}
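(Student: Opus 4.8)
The plan is to reduce the positive semi-definiteness condition to an elementary inequality on partial sums and then to determine $\alpha^*(m)$ \emph{exactly}; all four assertions follow from this.

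Assertion (1) is immediate: for each fixed $n$ the set of PSD tensors is a convex cone and $\alpha\mapsto\mathcal{M}(n,m)-\alpha\mathcal{N}(n,m)$ is affine in $\alpha$, so if $\alpha_1(m),\alpha_2(m)\in\Omega(m)$ and $\alpha=t\alpha_1(m)+(1-t)\alpha_2(m)$ with $t\in[0,1]$, then $\mathcal{M}(n,m)-\alpha\mathcal{N}(n,m)=t\bigl(\mathcal{M}(n,m)-\alpha_1(m)\mathcal{N}(n,m)\bigr)+(1-t)\bigl(\mathcal{M}(n,m)-\alpha_2(m)\mathcal{N}(n,m)\bigr)$ is PSD for every $n$, i.e.\ $\alpha\in\Omega(m)$. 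For (2)--(4) I would use the identities $\mathcal{M}(n,m)=\sum_{i=2}^n{\bf r}_i^m+{\bf e}^m$ and $\mathcal{N}(n,m)={\bf e}^m-\sum_{i=1}^n{\bf e}_i^m$ established in the proof of Corollary~\ref{thm3.6}. Putting $s_i:={\bf r}_i^\top{\bf x}=x_i+x_{i+1}+\cdots+x_n$ (so $s_1={\bf e}^\top{\bf x}$, $s_{n+1}=0$, $x_i=s_i-s_{i+1}$), these give $\mathcal{M}(n,m){\bf x}^m=\sum_{i=1}^n s_i^m$ and $\mathcal{N}(n,m){\bf x}^m=s_1^m-\sum_{i=1}^n x_i^m$, whence for $\alpha>0$
\[
\bigl(\mathcal{M}(n,m)-\alpha\mathcal{N}(n,m)\bigr){\bf x}^m\ge0\ \Longleftrightarrow\ \sum_{i=2}^n s_i^m+\sum_{i=1}^n x_i^m\ \ge\ \Bigl(1-\tfrac1\alpha\Bigr)\sum_{i=1}^n s_i^m .
\]

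For the upper bound on $\alpha^*(m)$ I would use the test vector ${\bf x}^{(n)}=\bigl(1,\tfrac12,\tfrac1{2^2},\dots,\tfrac1{2^{n-1}}\bigr)^\top$, as in the Moler matrix proof and in Theorem~\ref{p-1}. Here $s_i^{(n)}=\sum_{j=i}^n2^{-(j-1)}$ satisfies $s_i^{(n)}\le2^{2-i}$ and $s_i^{(n)}\to2^{2-i}$, while $x_i^{(n)}=2^{-(i-1)}$; dominating the partial sums by the summable sequence $(2^{2-i})_i$, a direct computation gives $\bigl(\mathcal{M}(n,m)-\alpha\mathcal{N}(n,m)\bigr)({\bf x}^{(n)})^m\to\dfrac{2^m-\alpha(2^m-2)}{1-2^{-m}}$ as $n\to\infty$, which is negative exactly when $\alpha>\dfrac{2^m}{2^m-2}=\dfrac{2^{m-1}}{2^{m-1}-1}=:\alpha_0(m)$. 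Hence any $\alpha>\alpha_0(m)$ lies outside $\Omega(m)$, so $\alpha^*(m)\le\alpha_0(m)$ (consistently with $\alpha^*(2)=2$ in Theorem~\ref{p-1}).

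For the matching lower bound I claim $\alpha_0(m)\in\Omega(m)$. Since $1-1/\alpha_0(m)=2^{1-m}$, the equivalence above reduces this to proving
\[
\sum_{i=2}^n s_i^m+\sum_{i=1}^n x_i^m\ \ge\ 2^{1-m}\sum_{i=1}^n s_i^m\qquad\text{for all }n\text{ and }{\bf x}\in\mathbb{R}^n,
\]
and here I would use convexity of $t\mapsto t^m$ on $\mathbb{R}$ (valid since $m$ is even): Jensen applied to the two numbers $x_i=s_i-s_{i+1}$ and $s_{i+1}$, whose average is $s_i/2$, gives $x_i^m+s_{i+1}^m\ge2\,(s_i/2)^m=2^{1-m}s_i^m$ for $i=1,\dots,n-1$, and $x_n^m=s_n^m\ge2^{1-m}s_n^m$; summing these $n$ inequalities yields exactly the displayed bound. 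Thus $\alpha^*(m)=\alpha_0(m)=\dfrac{2^{m-1}}{2^{m-1}-1}=1+\dfrac1{2^{m-1}-1}$. Now (3) is clear, since this value lies in $\Omega(m)$; (2) follows since $\alpha_0(m)>1$ always and $\alpha_0(m)<2\Leftrightarrow2^{m-1}>2$, which holds for even $m\ge4$; and (4) follows since $1+\tfrac1{2^{m-1}-1}$ strictly decreases in $m$ with limit $1$, i.e.\ $\alpha^*(m)\searrow1$. The one genuinely non-routine step is this lower bound --- realizing that $\alpha_0(m)$ is the exact threshold and that the weak-looking pairwise estimate $x_i^m+s_{i+1}^m\ge2^{1-m}s_i^m$ telescopes perfectly over $i$; everything else is bookkeeping, modulo mild care that the $n\to\infty$ limit in the upper bound is legitimate, which is ensured by the domination noted above.
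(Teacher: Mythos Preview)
Your argument is correct and takes a genuinely different route from the paper. After the common reduction via $\mathcal{M}(n,m)=\sum_{i=1}^n{\bf r}_i^m$ and $\mathcal{N}(n,m)={\bf e}^m-\sum_{i}{\bf e}_i^m$, the paper proceeds indirectly: for part~(2) it bounds $\alpha^*(m)<2$ with the same geometric test vector, but for $\alpha^*(m)>1$ it introduces an auxiliary function $f_m(\beta)=\lim_n\min_{z_1=1}g_{n,m}({\bf z},\beta)$, shows $f_m$ is nondecreasing, continuous (invoking a perturbation result from \cite{BS}), and has a unique fixed point $\beta^*(m)\in(0,1)$ equal to $\alpha^*(m)-1$; part~(3) then follows from this fixed-point identification, and part~(4) from a separate monotonicity-in-$m$ estimate on $f_m$. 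No closed form for $\alpha^*(m)$ is obtained; the values $\alpha^*(4)\approx1.1429$, $\alpha^*(6)\approx1.0323$, $\alpha^*(8)\approx1.0079$ are computed numerically via Algorithm~\ref{alg-1}.

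Your approach is both shorter and strictly stronger: the Jensen step $x_i^m+s_{i+1}^m\ge 2^{1-m}s_i^m$ summed over $i$ nails the lower bound sharply, and together with the upper bound from the same test vector it yields the exact value $\alpha^*(m)=\dfrac{2^{m-1}}{2^{m-1}-1}$, from which (2)--(4) are immediate. Note that your formula reproduces the paper's numerical values exactly ($8/7,\,32/31,\,128/127$), which is a good sanity check. What you lose relative to the paper is only the auxiliary machinery $f_{n,m},f_m$, which the paper re-uses in Theorem~\ref{p-3}; as a stand-alone proof of Proposition~\ref{pro-alp}, yours is a clear improvement.
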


\begin{proof}
(1) It is obvious.

(2) Since $1\in \Omega(m)$ for all even $m$,  $\Omega(m)\neq \emptyset.$ Let us consider  the tensor $\mathcal{M}(n,m)-2\mathcal{N}(n,m)$ and  $\displaystyle {\bf x}=\left(1,\displaystyle\frac{1}{2},\cdots,\displaystyle\frac{1}{2^{n-1}}\right)^\top \in\mathbb{R}^n.$  Then
$$(\mathcal{M}(n,m)-2\mathcal{N}(n,m)){\bf x}^m=2\displaystyle\sum_{i=1}^n ({\bf e}_i^\top {\bf x})^m+\sum_{i=2}^n({\bf r}_i^\top {\bf x})^m-({\bf e}^\top {\bf x})^m,$$ where ${\bf e}=(1,\cdots,1)^\top $. When $m\geq 4$ and $n\geq 2,$ we have $$\displaystyle\sum_{i=1}^n({\bf e}_i^\top {\bf x})^m=\sum_{i=1}^n\frac{1}{(2^m)^{i-1}}\leq\displaystyle \frac{2^m}{2^m-1}\leq\frac{16}{15},$$
$$\displaystyle\sum_{i=2}^n({\bf r}_i^\top {\bf x})^m\leq\sum_{i=1}^{n-1}\frac{1}{(2^m)^{i}}\leq\displaystyle \frac{2^m}{2^m-1}\leq \frac{16}{15},$$ and $$({\bf e}^\top {\bf x})^m=(\sum_{i=1}^n\displaystyle\frac{1}{2^{i-1}})^m=\displaystyle\left(\frac{1-\displaystyle\frac{1}{2^{n}}}{1-\displaystyle\frac{1}{2}}\right)^m\geq\left(\frac{3}{2}\right)^m.$$  Then $$(\mathcal{M}(n,m)-2\mathcal{N}(n,m)){\bf x}^m\leq \frac{48}{15}-\left(\frac{3}{2}\right)^m<0.$$ Therefore, for all even $m$, $2$ is an upper bound of $\Omega(m)$.   Then $\alpha^{*}(m)$ exists and $\alpha^*(m)<2$.

Now we prove $\alpha^*(m)>1$.
Let
$$\mathcal{U}(n,m)=\mathcal{M}(n,m)-\mathcal{N}(n,m), \mathcal{V}(n,m;\beta)=\mathcal{U}(n,m)-\beta\mathcal{N}(n,m), \beta=\alpha-1.$$ First, we need to prove that there exists $\beta\in(0,1)$ such that $$\mathcal{V}(n,m; \beta){\bf x}^m=(1+\beta)\sum_{i=1}^n({\bf e}_i^\top {\bf x})^m+\sum_{i=2}^n({\bf r}_i^\top {\bf x})^m-\beta({\bf e}^\top {\bf x})^m\geq 0,$$ for all $n\in\mathbb{R}$ and ${\bf x}\in\mathbb{R}^n$ satisfying $||{\bf x}||_m=1.$

If ${\bf e}^\top {\bf x}=0$, then $\mathcal{V}(n,m; \beta){\bf x}^m\geq 0$  for all $\beta \in [0,1]$. If ${\bf e}^\top {\bf x}\neq 0$, then there exists ${\bf y}=(y_1, y_2, \cdots, y_n)\in\mathbb{R}^n$ and ${\bf z}=(z_1, z_2, \cdots, z_n)\in\mathbb{R}^n$ such that
$$y_1={\bf e}^\top {\bf x}, y_i={\bf r}_i^\top {\bf x}, i=2,\cdots,n,$$ and
$$z_i=\displaystyle\frac{y_i}{y_1}, i=1,\cdots,n.$$
Then $z_1=1$ and $$\mathcal{V}(n,m; \beta){\bf x}^m=y_1^m\left[(1+\beta)\left(\sum_{i=1}^{n-1}(z_i-z_{i+1})^m+z_n^m\right)+\sum_{i=2}^n z_i^m-\beta\right].$$
Let
\begin{equation}
g_{n,m}({\bf z},\beta)=(1+\beta)\left(\sum_{i=1}^{n-1}(z_i-z_{i+1})^m+z_n^m\right)+\sum_{i=2}^n z_i^m,
\end{equation}
and
\begin{equation}\label{eq-g}
f_{n,m}(\beta)~=\displaystyle\min_{{\bf z}\in\mathbb{R}^n, z_1=1} ~g_{n,m}({\bf z},\beta).
\end{equation}
It is easy to get $$0\leq f_{n+1,m}(\beta)\leq f_{n,m}(\beta), \mbox{for all}~ \beta \in [0,1].$$ Hence,
\begin{equation}
f_{m}(\beta)=\displaystyle\lim_{n\rightarrow+\infty}f_{n,m}(\beta)
\end{equation}
exists for all $\beta\in[0,1]$.

 In fact, if there exists $ \beta \in [0,1]$ such that $f_m(\beta)=0,$ then by the definition of $f_{n,m}(\beta),$ there exists ${\bf z}^*\in\mathbb{R}^n$ with $z_1^*=1$ such that $(1- z_2^*)^m<\varepsilon$ and $z_2^*<\varepsilon,$ for any $\varepsilon>0,$ which is impossible.  Then $f_m(\beta)>0$. Furthermore, $f_m(0)>0.$

Additionally, when $m\geq 4,$ assuming that ${\bf z}^*=\left(1, \displaystyle\frac{1}{2},0,\cdots,0\right)^\top,$ we get $g_{n,m}({\bf z}^*,1)=\displaystyle\frac{5}{2^m}.$ Thus $f_m(1)\leq f_{n,m}(1)\leq \displaystyle\frac{5}{2^m}<1.$

Moreover, since $m$ is even, $$g_{n,m}({\bf z},\beta_1)\leq g_{n,m}({\bf z},\beta_2), \mbox{for all}~ {\bf z}\in\mathbb{R}^n, \beta_1, \beta_2 ~\mbox{with}~ 0\leq \beta_1\leq\beta_2.$$
Then $f_{n,m}(\beta_1)\leq f_{n,m}(\beta_2)$ and $f_m(\beta_1)\leq f_m(\beta_2),$ which means that $f_m(\beta)$ is a nondecreasing function in $\beta$ on $[0,1].$

Then we prove that $f_m(\beta)$ is a continuous function in $\beta$ on $(0,1)$. Denote $f_m(\beta^+)$, $f_m(\beta^-)$, as the right-hand and left-hand limit on $\beta \in (0,1)$,  $f_m(0^+),$ $f_m(1^-)$  as the right-hand limit on $0$, and left-hand limit on $1$ of the function $f_m(\beta)$, respectively. Since $f_m(\beta)$ is a nondecreasing function, for any $\beta\in(0,1),$ $f_m(\beta^+)$, $f_m(\beta^-)$, $f_m(0^+),$ $f_m(1^-)$ exist and $f_m(\beta^+)\geq f_m(\beta^-)$. Assuming that $f_m(\beta^+)>f_m(\beta^-)$, and $\delta=\frac{f_m(\beta^+)-f_m(\beta^-)}{2}>0,$ for $0<\beta_1<\beta<\beta_2$, there exists $N^*$, when $n>N^*$, we have
\begin{equation*}
\begin{aligned}
f_{n,m}(\beta_1)&\leq f_{m}(\beta_1)+\displaystyle\frac{f_m(\beta^+)-f_m(\beta^-)}{2}\\
&\leq f_{m}(\beta^-)+\displaystyle\frac{f_m(\beta^+)-f_m(\beta^-)}{2}\\
&=\displaystyle\frac{f_m(\beta^+)+f_m(\beta^-)}{2},\\
\end{aligned}
\end{equation*}
and
\begin{equation*}
f_{n,m}(\beta_2)\geq f_m(\beta_2)\geq f_m(\beta^+).
\end{equation*}
Thus, when $n>N^*$, $$f_{n,m}(\beta_2)-f_{n,m}(\beta_1)\geq \displaystyle\frac{f_m(\beta^+)-f_m(\beta^-)}{2}=\delta>0.$$

Since $g_{n,m}({\bf z},\beta)$ is continuous in $\beta$, and the level set of $g_{n,m}({\bf z},\beta)$ is bounded, according to Proposition 4.4 in \cite{BS}, $f_{n,m}(\beta)$ is continuous in $\beta$. When $\beta_1\rightarrow\beta, \beta_2\rightarrow\beta$, we obtain that $f_{n,m}(\beta^+)-f_{n,m}(\beta^-)\geq \displaystyle\frac{f_m(\beta^+)-f_m(\beta^-)}{2}=\delta>0$, which is a contradiction. Hence, $f_m(\beta)$ is continuous in $\beta$ on $(0,1)$.

Finally, $$\partial(f_{n,m}(\beta))_{\beta}=\left\{\sum_{i=1}^{n-1}(z_i^*-z_{i+1}^*)^m+({z_n^*})^m\right\},$$ where ${\bf z}^*\in\displaystyle\arg\min_{{\bf z}\in\mathbb{R}^n,z_1=1}g_{n,m}({\bf z},\beta)$. Noting that $f_{n,m}(\beta)< 1,~ \mbox{for any}~ \beta\in [0,1]$, there exists $N>0$ such that, when $n>N$,  $||\eta_{n,m}||< 1, \forall \eta_{n,m} \in \partial(f_{n,m}(\beta))_{\beta}$ and $||\eta_m||< 1, \forall \eta_m\in \partial(f_{m}(\beta))_{\beta}$.

Hence, for all ${\bf z}\in\mathbb{R}^n$ with $z_1=1$, there exists only one $\beta^*(m)\in(0,1)$ satisfying $f_m(\beta^*(m))=\beta^*(m).$ Besides that, when $0<\beta\leq{\beta}^*(m),$ $g_{n,m}({\bf z},\beta)\geq f_{n,m}(\beta)\geq f_m(\beta)\geq\beta;$ When ${\beta}^*(m)\leq\beta<1, f_m(\beta)\leq\beta.$
This means that there exists $1>\beta>0$ satisfying $$\mathcal{V}(n,m; \beta){\bf x}^m\geq 0,$$ and there exists $\alpha=1+\beta>1$, satisfying $$\mathcal{M}(n,m)-\alpha\mathcal{N}(n,m)\succeq 0.$$ Thus $\alpha^*(m)>1.$

In addition, ${\beta}^*(m)=\alpha^*(m)-1$ is proved. Apparently, $\alpha^*(m)-1\geq \beta^*(m).$ If $\alpha^*(m)-1> \beta^*(m)$, there exists
$\beta_1$ such that $\beta^*(m)<\beta_1<\alpha^*(m)-1.$ Since $f_m(\beta_1)<\beta_1$, by the definition of $f_m(\beta),$ there exists $N>0$ such that, when $n>N,$ $f_{n,m}(\beta_1)-\beta_1<0.$ Therefore, $\mathcal{U}(n,m)-\beta_1\mathcal{N}(n,m)\not\succeq 0,$ which is a contradiction. Thus $\beta^*(m)=\alpha^*(m)-1.$

 (3) From Theorem \ref{p-1}, when $m=2,$ $\alpha^*(2)=\max\{\Omega(2)\}.$ According to (2), when $m\geq 4,$ $\mathcal{M}(n,m)-\alpha^*(m)\mathcal{N}(n,m)\succeq 0,$ which means that $\alpha^*(m)\in \Omega(m).$ Therefore, $\alpha^*(m)=\max\{\Omega(m)\},$ for all even $m$.

(4) Since $0<f_{n,m}(\beta)<1, \beta\in [0,1],$ when $m$ is even and $m\geq4,$ there exists a ${\bf z}^*\in\arg\displaystyle\min_{{\bf z }\in \mathbb{R}^n,z_1=1} g_{n,m}({\bf z},\beta)$ satisfying
$$(z_i^*-z_{i+1}^*)^m\leq 1, i=1,\cdots,n-1, (z_i^*)^m\leq 1, i=2,\cdots,n.$$ Then
\begin{equation*}
\begin{aligned}
&(z_i^*-z_{i+1}^*)^{m+2}\leq (z_i^*-z_{i+1}^*)^m, i=1,\cdots,n-1;\\
&(z_i^*)^{m+2}\leq (z_i^*)^m, i=2,\cdots,n.
\end{aligned}
\end{equation*}
Hence, $f_{n,m+2}(\beta)\leq f_{n,m}(\beta),$ and there exists $N>0$, when $n>N,$ $f_{m+2}(\beta)\leq f_{m}(\beta).$ By the definition of ${\beta}^*(m)$ above, without losing generality, assuming that $0\leq\beta\leq{\beta}^*(m),$ we obtain that
 $f_{m-2}(\beta)\geq f_m(\beta)\geq \beta.$ Therefore, ${\beta}^*(m-2) \geq {\beta}^*(m)$, which means $\alpha^*(m-2)\geq \alpha^*(m).$

 Since $\beta^*(m)\geq 0$ and $\beta^*(m)$ is nonincreasing, $\beta^*=\displaystyle\lim_{m\rightarrow+\infty}\beta^*(m)$ exists.
If $\beta^*\neq 0,$ then there exists a $N>0,$ when $n>N$ and $m>N,$ we have $f_{n,m}(\beta^*(m))\geq f_m(\beta^*(m))=\beta^*(m)\geq \beta^*>0.$ However, in the other side, when $n>N$ and $m>N,$ $f_{n,m}(\beta^*(m))\rightarrow 0$, which is a contradiction. Hence, $\beta^*=0$ and $\displaystyle\lim_{m\rightarrow+\infty}\alpha^*(m)=1.$
\end{proof}

We could use the following algorithm to compute $\alpha^*(m)$.
\begin{algorithm}[Computing $\alpha^*(m)$]\label{alg-1}
\begin{itemize}
  \item[]
  \item[S1:] Let $\beta^0=1, \beta^1=\beta_0^1=1, n=1, k=0, \varepsilon>0,$ $m$ be a even number ;
  \item[S2:] Solve the problem (\ref{eq-g}) to get $f_{n,m}(\beta_k^n);$
  \item[S3:] If $f_{n,m}(\beta_k^n)-\beta_k^n<-\varepsilon$, then $\beta_{k+1}^n=\frac{\beta_k^n}{2}$, if $f_{n,m}(\beta_k^n)-\beta_k^n>\varepsilon$, $\beta_{k+1}=\frac{\beta_k^n+1}{2}$, $k=k+1$, goto S2; else denote $\beta^n=\beta^n_k$
  \item[S4:] If $|\beta^n-\beta^{n-1}|<\varepsilon$, stop and output $\beta^{n}+1$; else $n=n+1$, $k=0$, goto S2.
\end{itemize}
\end{algorithm}
It is not hard to compute  $f_{n,m}(\beta)$  since (\ref{eq-g}) is a convex problem.
By computing, we get that  $\alpha^*(4)=1.1429$, $\alpha^*(6)=1.0323$, $\alpha^*(8)=1.0079$.
In the final work, a property of the minimal eigenvalue of the Sup-MO tensor is obtained. Then, the positive definiteness of
Sup-MO tensors is proved.

\begin{theorem}\label{p-3}
Let $\Omega(m)$ be an MO set. For all even $m\geq 4$ and $\alpha^*(m)=\max\{\Omega(m)\},$ $\mathcal{A}(n,m; \alpha^*(m))$ is a Sup-MO tensor, i.e.,
\begin{equation*}
\begin{aligned}
&\mathcal{A}_{i_1,\cdots,i_m}(n,m; \alpha^*(m))=\mathcal{M}(n,m)-\alpha^*(m)\mathcal{N}(n,m)\\
&=\left\{
             \begin{array}{cc}
               i_1, & i_1=i_2=\cdots=i_m,  \\
               \min\{i_1,i_2,\cdots,i_m\}-\alpha^*(m), & otherwise. \\
             \end{array}
           \right.
\end{aligned}
\end{equation*}
 Assume that $\lambda_{min}(\mathcal{A}(n,m; \alpha^*(m)))$ is the smallest eigenvalue of $\mathcal{A}(n,m; \alpha^*(m)).$ Then, \\$\lambda_{min}(\mathcal{A}(n,m; \alpha^*(m)))$ strictly decreases to $0$, when $n\rightarrow\infty$. Furthermore, $\mathcal{A}(n,m; \alpha^*(m))$ is positive definite.
\end{theorem}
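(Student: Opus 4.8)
\emph{Setup.} The plan is to work throughout with the decomposition $\mathcal{M}(n,m)=\sum_{i=1}^n{\bf r}_i^m$ (where ${\bf r}_1={\bf e}$) and $\mathcal{N}(n,m)={\bf e}^m-\sum_{i=1}^n{\bf e}_i^m$ already used in Corollary \ref{thm3.6} and Proposition \ref{pro-alp}. Writing $\alpha^*=\alpha^*(m)$, $\beta^*=\alpha^*-1$ and $w_i={\bf r}_i^\top{\bf x}$ (so $w_1={\bf e}^\top{\bf x}$), one gets $\mathcal{A}(n,m;\alpha^*){\bf x}^m=\sum_{i=2}^n w_i^m+\alpha^*\sum_{i=1}^n x_i^m-\beta^*w_1^m$, and, when $w_1\neq0$, $\mathcal{A}(n,m;\alpha^*){\bf x}^m=w_1^m\bigl(g_{n,m}({\bf z},\beta^*)-\beta^*\bigr)$ with $z_i=w_i/w_1$ and $z_1=1$. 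I use from Proposition \ref{pro-alp} that $\beta^*\in(0,1)$, that $\beta^*$ is the unique fixed point of $f_m$, and that $f_{n,m}(\beta^*)$ is nonincreasing in $n$ with limit $\beta^*$.

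\emph{Positive definiteness.} If ${\bf e}^\top{\bf x}=0$, then $\mathcal{A}(n,m;\alpha^*){\bf x}^m=\sum_{i=2}^n w_i^m+\alpha^*\sum_i x_i^m\ge\alpha^*\|{\bf x}\|_m^m>0$ for ${\bf x}\neq0$, since $m$ is even and $\alpha^*>0$. If ${\bf e}^\top{\bf x}\neq0$, then $\mathcal{A}(n,m;\alpha^*){\bf x}^m=w_1^m\bigl(g_{n,m}({\bf z},\beta^*)-\beta^*\bigr)\ge w_1^m\bigl(f_{n,m}(\beta^*)-\beta^*\bigr)$, so it suffices to show $f_{n,m}(\beta^*)>\beta^*$ for every finite $n$; as $f_{n,m}(\beta^*)\to\beta^*$ monotonically from above, this reduces to showing $n\mapsto f_{n,m}(\beta^*)$ is strictly decreasing. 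If it were not, then for some $n$ a minimizer ${\bf z}^*$ of $g_{n,m}(\cdot,\beta^*)$ over $\{z_1=1\}$, extended by $z_{n+1}=0$, would minimize $g_{n+1,m}(\cdot,\beta^*)$; differentiating in the last coordinate at $0$ gives $-(1+\beta^*)m(z_n^*)^{m-1}=0$, forcing $z_n^*=0$, so ${\bf z}^*|_{[n-1]}$ minimizes $g_{n-1,m}(\cdot,\beta^*)$ with the same value. Iterating forces ${\bf z}^*=(1,0,\dots,0)$ and $f_{n,m}(\beta^*)=1+\beta^*$, contradicting $f_{2,m}(\beta^*)<1+\beta^*$ (an elementary one-variable minimization). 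Hence $\mathcal{A}(n,m;\alpha^*)$ is positive definite.

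\emph{The smallest eigenvalue.} Monotonicity $\lambda_{min}(\mathcal{A}(n+1,m;\alpha^*))\le\lambda_{min}(\mathcal{A}(n,m;\alpha^*))$ is immediate by zero-padding an optimal vector, since the restriction of $\mathcal{A}(n+1,m;\alpha^*)$ to the first $n$ indices is $\mathcal{A}(n,m;\alpha^*)$. For the limit I would mimic the Moler-matrix case and take ${\bf x}^{(n)}=(1,\tfrac12,\dots,\tfrac1{2^{n-1}})^\top$; direct summation gives $\mathcal{M}(n,m)({\bf x}^{(n)})^m\to\frac{2^{2m}}{2^m-1}$, $\mathcal{N}(n,m)({\bf x}^{(n)})^m\to\frac{2^m(2^m-2)}{2^m-1}$, $\|{\bf x}^{(n)}\|_m^m\to\frac{2^m}{2^m-1}$, so by Theorem \ref{theo-1}(1),
$$0<\lambda_{min}(\mathcal{A}(n,m;\alpha^*))\le\frac{\mathcal{A}(n,m;\alpha^*)({\bf x}^{(n)})^m}{\|{\bf x}^{(n)}\|_m^m}\longrightarrow 2^m-\alpha^*(2^m-2).$$
The right-hand limit is $0$ exactly when $\alpha^*(m)=\frac{2^{m-1}}{2^{m-1}-1}$, so the crux is this evaluation. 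I would get it by noting that the limiting functional $\lim_n g_{n,m}(\cdot,\beta)$ is convex in ${\bf z}$ and that the geometric vector $z_i=2^{1-i}$ satisfies its first-order optimality conditions precisely for $\beta_0=\frac1{2^{m-1}-1}$; its value there is $\frac{2+\beta_0}{2^m-1}=\beta_0$, so $\beta_0$ is a fixed point of $f_m$, hence $\beta^*(m)=\beta_0$ by uniqueness. (One may also avoid the closed form by testing instead with the minimizer of $g_{n,m}(\cdot,\beta^*)$: its numerator is $f_{n,m}(\beta^*)-\beta^*\to0$, while the denominator $\sum_{i<n}(z_i^*-z_{i+1}^*)^m+(z_n^*)^m$ stays bounded below, since otherwise $z_j^*\to1$ for every fixed $j$ and $\sum_{i\ge2}(z_i^*)^m\to\infty$, impossible as that sum is at most $f_{n,m}(\beta^*)$.)

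\emph{Strict decrease, and the main obstacle.} Suppose $\lambda_{min}(\mathcal{A}(n+1,m;\alpha^*))=\lambda_{min}(\mathcal{A}(n,m;\alpha^*))=:\lambda$ for some $n\ge2$; one first checks $\lambda\le\lambda_{min}(\mathcal{A}(2,m;\alpha^*))<1<\alpha^*$, the strict inequality coming from perturbing ${\bf e}_1$ and using $\alpha^*>1$, and this bound persists at every lower dimension. An optimal vector ${\bf x}^*$ for $\mathcal{A}(n,m;\alpha^*)$ is an H-eigenvector with eigenvalue $\lambda$, and its zero-extension $({\bf x}^*,0)$ has the same Rayleigh quotient for $\mathcal{A}(n+1,m;\alpha^*)$, hence is optimal and an H-eigenvector with eigenvalue $\lambda$ there. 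Writing out the $n$-th component of the first eigen-equation and the $(n+1)$-st component of the second and simplifying (using $\min\{n+1,i_2,\dots,i_m\}=\min\{i_2,\dots,i_m\}$ and $\sum_{i_2,\dots,i_m}\min\{i_2,\dots,i_m\}x^*_{i_2}\cdots x^*_{i_m}=\sum_{k=1}^n(w_k^*)^{m-1}$), both reduce to $\sum_{k=1}^n(w_k^*)^{m-1}$ up to the single term $(\lambda-\alpha^*)(x_n^*)^{m-1}$; subtracting forces $(\lambda-\alpha^*)(x_n^*)^{m-1}=0$, i.e.\ $x_n^*=0$. Then ${\bf x}^*$ is supported in $[n-1]$, so $\lambda_{min}(\mathcal{A}(n-1,m;\alpha^*))=\lambda$, and the argument recurses down until it yields $\lambda_{min}(\mathcal{A}(2,m;\alpha^*))=\lambda_{min}(\mathcal{A}(1,m;\alpha^*))=1$, contradicting $\lambda_{min}(\mathcal{A}(2,m;\alpha^*))<1$. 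Hence $\lambda_{min}(\mathcal{A}(n,m;\alpha^*))$ strictly decreases to $0$. The main obstacle is the evaluation $\alpha^*(m)=\frac{2^{m-1}}{2^{m-1}-1}$ — equivalently, recognizing that the extremal direction is the geometric sequence $(2^{1-i})_i$ — since without it the natural test vector only forces the Rayleigh quotient to a nonnegative, not necessarily zero, limit; the self-contained alternative sidesteps this but then needs the (mildly delicate) lower bound on the denominator coming from the minimizer of $g_{n,m}(\cdot,\beta^*)$.
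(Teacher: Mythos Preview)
Your proposal is correct, and the strict-decrease argument (zero-extend an optimal eigenvector, subtract the last two eigen-equations to force $(\alpha^*-\lambda)x_n^{m-1}=0$, then recurse) is essentially identical to the paper's. The differences lie in the other two parts.

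\emph{Positive definiteness.} The paper does not prove this separately: it simply notes at the very end that $\alpha^*\in\Omega(m)$ gives $\lambda_{\min}\ge0$, and strict decrease to $0$ then forces $\lambda_{\min}(\mathcal{A}(n,m;\alpha^*))>0$ for each finite $n$. Your direct argument via strict monotonicity of $n\mapsto f_{n,m}(\beta^*)$ is correct and self-contained, but becomes redundant once the eigenvalue part is done.

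\emph{The limit $\lambda_{\min}\to0$.} The paper uses exactly your parenthetical alternative: build the test vector ${\bf w}^*$ from the minimizer ${\bf z}^*$ of $g_{n,m}(\cdot,\beta^*)$ via $w_i^*=z_i^*-z_{i+1}^*$, $w_n^*=z_n^*$; the numerator is $f_{n,m}(\beta^*)-\beta^*\to0$, and for the denominator the paper gives the clean explicit bound $\|{\bf w}^*\|_m\ge 1-5^{1/m}/2$ (from $\sum_{i\ge2}(z_i^*)^m\le f_{n,m}(1)\le 5/2^m$, hence $|z_2^*|\le 5^{1/m}/2$ and $w_1^*\ge 1-5^{1/m}/2$), which is tidier than your contradiction sketch. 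Your primary route, however---the closed form $\alpha^*(m)=\tfrac{2^{m-1}}{2^{m-1}-1}$ obtained by recognizing the geometric sequence $z_i=2^{1-i}$ as the limiting minimizer---is \emph{not} in the paper: the paper only computes $\alpha^*(4),\alpha^*(6),\alpha^*(8)$ numerically via Algorithm~\ref{alg-1}, and your formula matches those values exactly ($8/7$, $32/31$, $128/127$). This is a genuine strengthening. The only soft spot is identifying $f_m(\beta_0)$ with the infimum of the limiting functional; you need both inequalities, and ``$f_m(\beta_0)\ge\beta_0$'' uses that $f_{n,m}(\beta_0)$ minimizes over finitely supported sequences (a subset of $\ell^m$), while ``$f_m(\beta_0)\le\beta_0$'' follows by truncating the geometric sequence and letting $n\to\infty$. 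With that spelled out, your closed-form argument is complete and yields more than the paper proves.
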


\begin{proof} By Theorem \ref{theo-1}, it is easy to see that $\lambda_{min}(\mathcal{A}(n,m; \alpha^*(m)))$ decreases in $n$, for all even $m$. In the following, we prove that it is strictly decreasing to $0$. $g_{n,m}({\bf z},\beta), f_{n,m}(\beta)$ and $f_{m}(\beta)$ are defined as (3-5).

Since $1>f_m(\beta^*(m))=\beta^*(m)>0$, $f_{n,m}(\beta^*(m))\rightarrow\beta^*(m)$. Suppose that ${\bf z}^*\in\displaystyle\arg\min_{{\bf z}\in\mathbb{R}^n, z_1=1} g_{n,m}({\bf z},\beta^*(m))$. Because $z_1^*=1$, $||{\bf z}^*||_m\geq 1$. Let $w_i^*=z_i^*-z_{i+1}^*,i=1,\cdots,n-1$, $w_n^*=z_n^*.$ Then, when $m\geq 4,$
  $$\beta^*(m)\leq g_{n,m}({\bf z}^*,\beta^*(m))=f_{n,m}(\beta^*)\leq f_{n,m}(1)<\frac{5}{2^m}<1.$$

By the definition of $g_{n,m}({\bf z}^*,\beta^*(m)),$ $\displaystyle\sum_{i=2}^n (z_i^*)^m\leq
  \frac{5}{2^m}<1$, which means that $|z_2^*|\leq {\displaystyle\frac{5^\frac{1}{m}}{2}}<1.$ Thus $w_1^*=1-z_2^*\geq 1-\displaystyle\frac{5^\frac{1}{m}}{2}$. Hence, when $m\geq 4,$ $||{\bf w}^*||_m\geq 1-\displaystyle\frac{5^\frac{1}{m}}{2}.$

 According to Theorem \ref{theo-1} $$0\leq\lambda_{min}(\mathcal{A}(n,m; \alpha^*(m)))\leq \displaystyle\frac{\mathcal{A}(n,m; \alpha^*(m))({\bf w}^*)^m}{||{\bf w}^*||_m^m}.$$ By the definition of ${\bf w}^*$,
   $\mathcal{A}(n,m; \alpha^*(m))({\bf w}^*)^m\rightarrow 0,$ when $n\rightarrow+\infty$. Since $||{\bf w}^*||_m\geq 1-\displaystyle\frac{5^\frac{1}{m}}{2},$ we get that $\lambda_{min}(\mathcal{A}(n,m; \alpha^*(m)))\rightarrow 0$, when $n\rightarrow\infty$.

   Then we prove that the decreasing of $\lambda_{min}(\mathcal{A}(n,m; \alpha^*(m)))$ is strict. Consider the following program:
   \begin{eqnarray}
   % \nonumber to remove numbering (before each equation)
      \min & \mathcal{A}(n,m; \alpha^*(m)){\bf x}^m \nonumber\\
      s.t.& ||{\bf x}||_m=1 .\nonumber
   \end{eqnarray}
   Then its KKT conditions are
\begin{equation}
 \begin{cases}
   & \mathcal{A}(n,m; \alpha^*(m)){\bf x}^{m-1}=\lambda{\bf x}^{[m-1]} \\
    & ||{\bf x}||_m=1.
\end{cases}
\end{equation}

   The smallest solution $\lambda_{n,m}$ and the corresponding vector ${\bf x}\in\mathbb{R}^n$  of above program are the smallest H-eigenvalue and H-eigenvector of $\mathcal{A}(n,m; \alpha^*(m))$. If $\lambda_{n,m}= \lambda_{n+1,m}$ for some $n$,
   then there exist ${\bf x}\in\mathbb{R}^n$ and $\bar{{\bf x}}\in\mathbb{R}^{n+1}$ with $\bar{{\bf x}}=({\bf x}^\top,0)^\top$ satisfying
 $$\mathcal{A}(n,m; \alpha^*(m)){\bf x}^{m-1}=\lambda_{n,m}{\bf x}^{[m-1]},$$
 $$\mathcal{A}(n+1,m; \alpha^*(m))\bar{{\bf x}}^{m-1}=\lambda_{n+1,m}\bar{{\bf x}}^{[m-1]}.$$ Hence,
$$\sum_{i_2,\cdots,i_m=1}^{n+1}\mathcal{A}(n+1,m; \alpha^*(m))_{n+1,i_2,\cdots,i_m} \bar{x}_{i_2} \cdots \bar{x}_{i_m}=\lambda_{n+1,m}\bar{x}_{n+1}^{m-1}.$$
    Since $\bar{{\bf x}}=({\bf x}^\top,0)^\top,$  the above equation is
$$\sum_{i_2,\cdots,i_m=1}^{n}\mathcal{A}(n+1,m; \alpha^*(m))_{n+1,i_2,\cdots,i_m} x_{i_2} \cdots x_{i_m}=0.$$
     Because
$$\sum_{i_2,\cdots,i_m=1}^{n}\mathcal{A}(n,m; \alpha^*(m))_{n,i_2,\cdots,i_m} x_{i_2} \cdots x_{i_m}=\lambda_{n,m}x_{n}^{m-1},$$
we have
\begin{equation*}
\begin{aligned}
&\sum_{i_2,\cdots,i_m=1}^{n}\mathcal{A}(n,m; \alpha^*(m))_{n,i_2,\cdots,i_m} x_{i_2} \cdots x_{i_m}\\
-&\sum_{i_2,\cdots,i_m=1}^{n}\mathcal{A}(n+1,m; \alpha^*(m))_{n+1,i_2,\cdots,i_m} x_{i_2} \cdots x_{i_m} \\
=&\alpha^*(m)x_{n}^{m-1}=\lambda_{n,m}x_{n}^{m-1}.
\end{aligned}
\end{equation*}

According to the above proof, $\alpha^*(m)>1>\lambda_{n,m}$. Therefore,  $x_n=0$. By the same discussion, we get ${\bf x}=0$, which is against $||{\bf x}||_m=1.$ Thus, $\lambda_{min}(\mathcal{A}(n,m; \alpha^*(m)))$  strictly decreases.
Finally, together with Corollary \ref{thm3.6}, we have $\mathcal{A}(n,m; \alpha^*(m))$ is positive definite.
\end{proof}

\section{Final Remarks}

In this paper, we construct the MO tensor with introducing the concepts of the MO value and the MO set. Then we mainly discuss two special cases of the MO tensor: the Sup-MO tensor and the essential MO tensor.  We prove that an even order essential MO tensor is  a completely positive tensor and positive definite. Then, some related properties of the Sup-MO value of an even order Sup-MO tensor are given. Furthermore, the positive definiteness of an even order Sup-MO tensor is proved, since the minimal H-eigenvalue of the Sup-MO tensor strictly decreases to 0, when $n\rightarrow\infty$. In the future work, some of the applications of the Sup-MO tensor and the properties of Sub-MO value and Sub-MO tensor will be discussed.

\medskip

There are three further research questions for the MO tensor, the Sup-MO tensor and  the Sub-MO tensor.

1. Are the Sup-MO tensors  SOS (sum-of-squares) tensors?  For the definition of SOS tensors, see \cite{CLQ}.    An SOS tensor is a PSD tensor, but not vice versa.    This theory can be traced back to David Hilbert \cite{Hi}.    We randomly tested some Sup-MO tensors, and found that they were SOS tensors.  We will study this in the future study.

2. We cannot verify whether $\alpha_*(m)$ can be reached or not. Hence, we do not know whether $\Omega(m)$ is compact or not. If the $\Omega(m)$ is compact, how to get the length of the MO set $\Omega(m)$ is also an interesting work.  We will continue to explore the properties of $\alpha_*(m)$ and $\Omega(m)$.

3. Since the properties of the Moler matrices make them to be good test matrices for the linear equations and eigensystems, we will try to find that if the Sup-MO tensor can also be a good candidate for testing in some tensor computation software packages or not.

%\bibliographystyle{abbrv}
%\nocite*
%\bibliography{bib_emten}

%For acknowledgements section, please don't number the section, please begin it with \section*{Acknowledgements}

% You may incorporate your references as follows in your main tex file.
% Using BibTex is not recommended but can be handled.

\end{document}